\title{Generalizing Abundancy Index \\ to Gaussian Integers}
\author{Vrishab Krishna \footnote{Vrishab Krishna, 182 Dream Meadows, Bangalore, Karnataka, India} \footnote{2010 {\it Mathematics Subject Classification} Primary 11R11; Secondary 11N80.} \footnote{{\it Keywords: } Gaussian integers, Abundancy Index, Friendly Numbers, $k$-powerful $t$-perfect Numbers}}
\affil{vrishab@cfrce.org}
\affil{Centre for Fundamental Research and Creative Education}
\date{}
\newtheorem{theorem}{Theorem}[section]
\newtheorem{corollary}{Corollary}[theorem]
\newtheorem{lemma}[theorem]{Lemma}
\newtheorem{definition}{Definition}[section]
\newtheorem{property}[]{Property}
\newcommand{\norm}[1]{\left\| #1 \right\|}
\begin{document}

\maketitle

\begin{abstract}
    Abundancy index refers to the ratio of the sum of the divisors of a number to the number itself. It is a concept of great importance in defining friendly and perfect numbers. Here, we describe a suitable generalization of abundancy index to the ring of Gaussian integers ($\mathbb{Z}[i]$). We first show that this generalization possesses many of the useful properties of the traditional abundancy index in $\mathbb{Z}$. We then investigate $k$-powerful $\tau$-perfect numbers and prove results regarding their existence in $\mathbb{Z}[i]$.
\end{abstract}

\section{Introduction}

The idea of abundancy index has been dated to around 2000 years ago, studied by the great mathematicians of Europe such as Fermat, Mersenne, Legendre, and Euler, and, more recently, by Paul Erdös. It is of great importance in the study of perfect and friendly numbers. In this paper, we seek to find a suitable generalization of a function, the abundancy index $I(\eta)$ for complex numbers, specifically the Gaussian integers ($\mathbb{Z}[i]$), study its properties, and apply it to the problem of the existence of different kinds of perfect numbers.

We first begin with notation. We refer to $\mathbb{N}$ as the set of natural numbers (starting from 1), $\mathbb{Z}$ as the set of integers, and $\mathbb{Z}[i]$ as the ring of Gaussian integers. We also define $\|\eta\| = \eta \Bar{\eta} = a^2 + b^2$ to be the norm of a complex number $\eta = a + bi$ and $|\eta| = \sqrt{\|\eta\|} = \sqrt{a^2 + b^2}$ the absolute value. In $\mathbb{Z}[i]$, we give the following definition in order to differentiate between nearly identical factors of numbers:

\begin{definition}
    We define two Gaussian integers $\alpha, \beta \in \mathbb{Z}[i]$ to be associated, i.e., $\alpha \sim \beta$ if there exists a unit $u \in \mathbb{Z}[i]$ ($|u| = 1$ or $u = \pm 1, \pm i$) such that $\alpha = u\beta$.
\end{definition}

For non-zero complex number $\eta$, let $\arg(\eta)$ represent the angle of $\eta$. It is easy to show that for any Gaussian Integer, we have an associate in the first quadrant. Henceforth, when we refer to primes in $\mathbb{Z}[i]$, we only deal with those in the first quadrant, i.e., $\eta \in \mathbb{Z}[i]$ with $\arg(\eta) \in [0, \frac{\pi}{2})$. From here onward $\pi$ will be used to refer to primes in $\mathbb{Z}[i]$.

With the above definitions of notation in mind, we proceed to discuss the traditional sum of divisors function, $\sigma_k(n)$ (where $n = p_1^{e_1} p_2^{e_2} \dots p_l^{e_l} > 0$ is the prime factorization of $n$), given by:

\begin{align*}
    \sigma_k(n) = \sum\limits_{c|n} c^k = \prod\limits_{p | n} \frac{p_i^{k(e_i+1)}-1}{p^k-1}.
\end{align*}
We write $\sigma_1(n) = \sigma(n)$. From the above definition, $\sigma_k(n)$ is a weakly multiplicative function: $\sigma_k(mn) = \sigma_k(m)\sigma_k(n)$ for $(m, n) = 1$, i.e., $m$ and $n$ are co-prime.

The abundancy index of $n \in \mathbb{N}$ is defined as the following:
\begin{align*}
    I_k(n) = \frac{\sigma_k(n)}{n^k} = \sigma_{-k}(n),
\end{align*}
with $I_1(n) = I(n)$. $I_k(n)$ is referred to as the $k$-powerful abundancy in the rest of the paper (similar to the definition found in \cite{Luo_2018}).

Abundancy index is of great importance in friendly and solitary numbers. Two numbers are said to be `friendly' if they have the same abundancy, i.e.,  $I(n_1) = I(n_2)$ if and only if  $n_1$ and $n_2$ are friendly. For example, all perfect numbers form a set of friendly numbers with abundancy 2. Conversely, a number that has no friends is said to be `solitary'. For example, 1, 18, 48, and so on are solitary numbers as no other numbers share the same abundancy \cite{Hickerson_2002}. Friendly and solitary numbers still constitute a number of unsolved problems in modern number theory. For example, the question whether 10 is solitary remains an unsolved problem. More generally, we have the concept of $k$-powerful friendly numbers where the $k$-powerful abundancy of two numbers are equal, i.e., $I_k(n_1) = I_k(n_2)$ $\iff$ $n_1$ and $n_2$ are $k$-powerful friendly. Conversely, we have $k$-powerful solitary numbers: numbers whose $k$-powerful abundancy is not shared by any other number.

We now discuss the Gaussian integers $\mathbb{Z}[i]$. Forming an integral domain, the Gaussian integers have been used to prove innumerable important results and novel uses have even been found in coding theory \cite{Bouyuklieva_2013}. Due to its importance in various spheres of modern number theory, the ring $\mathbb{Z}[i]$ and its properties is of prime importance. A complex-to-integer generalization of abundancy index is described by Defant \cite{Defant_2015}. However, such an approach loses the important information of factorization in $\mathbb{Z}[i]$ (of the imaginary component). Here, we endeavor to describe a complex-to-complex definition of abundancy.

Perfect numbers have been an important object of mathematical studies for millennia. Numbers whose sum of divisors is equal to twice the number itself are said to be perfect, i.e., they have an abundancy of 2. Close relationships between perfect numbers and Mersenne Primes have been found in the set of integers as well as that of Gaussian integers \cite{McDaniel_1974}. Several generalizations of these numbers are known of which one of the most popular is the multiply-perfect number, defined to be numbers of integral abundancy. Generalizing further, we have $k$-powerful multiply-perfect numbers with  $I_k(n) = t \in \mathbb{N}$. In this paper, we take this concept, extend it to $\mathbb{Z}[i]$: the $k$-powerful $\tau$-perfect numbers.

The paper is structured as follows. We begin by describing a suitable generalization of abundancy to $\mathbb{Z}[i]$. We then proceed to study and prove several important properties of this generalization. Finally, we prove some interesting results regarding the bounds on the function and the existence of $k$-powerful $\tau$-perfect numbers for various $k$.

\section{Extension to the $\mathbb{Z}[i]$}
There are innumerable possible generalizations of abundancy from $\mathbb{Z}$ to $\mathbb{Z}[i]$. We propose the following complex-to-complex definition of abundancy index:

\begin{definition} \label{def_I}
    Given $\eta = u\pi_1^{e_1}\pi_2^{e_2}\dots\pi_k^{e_k}$ is the factorization of $\eta \in \mathbb{Z}[i]$ where $u$ is a unit ($|u| = 1$) and each $\pi$ lies in the first quadrant. We have the sum of divisors function $\sigma_k(\eta)$ (as defined in \cite{Spira_1961}):
    \begin{align*}
        \sigma_k(\eta) = \prod \frac{\pi_i^{k(e_i+1)}-1}{\pi_i^k-1}.
    \end{align*}
    Then, the $k$-powerful abundancy index of $\eta$ is defined as:
    \begin{align*}
        I_k(\eta) = \prod \frac{\pi_i^{k(e_i+1)}-1}{\pi^{ke_i}(\pi_i^k-1)} = \frac{u^k\sigma_k(\eta)}{\eta^k}.
    \end{align*}
\end{definition}

Observe that:

\begin{align}\label{eqn1}
    I_k(\eta) = \prod \frac{\pi^k - \frac{1}{\pi^{ke_i}}}{\pi_i^k-1} = \prod \frac{\frac{1}{\pi^{k(e_i+1)}}-1}{\frac{1}{\pi_i^k}-1} = \sigma_{-k}(\eta).
\end{align}

Finally, we also note that $\sigma_k(\eta) \in \mathbb{Z}[i] \implies I_k(\eta) \in \mathbb{Q}[i]$

We now proceed to the main theorem of the paper: to show that many of the properties of abundancy observed in integers also hold for this generalization in the ring of Gaussian integers.

\begin{theorem}[\textbf{Important properties of $I(\eta)$}]
We have the following properties of $I(\eta)$:
    \begin{enumerate}
        \item It is weakly multiplicative, i.e., for any $\alpha$ $\beta \in \mathbb{Z}[i]$ where $(\alpha, \beta) = 1$, $I_k(\alpha \beta) = I_k(\alpha)I_k(\beta)$.
        \item For $\alpha, \beta \in \mathbb{Z}[i]$ such that  $\alpha | \beta$, $|I_k(\alpha)| \leq |I_k(\beta)|$ with equality only when $\alpha \sim \beta$.
        \item For all primes $\pi \in \mathbb{Z}[i]$, $I_k(\pi^n) \neq I_k(z)$ for all possible $z \in \mathbb{Z}[i]$ and $(z, \pi^n) = 1$, i.e., all powers of primes are solitary.
        \item  For all primes $\pi \in \mathbb{Z}[i]$, $\left|\frac{\pi^k+1}{\pi^k}\right| \leq |I_k(\pi^n)| < \frac{|\pi|^k}{|\pi_i|^k-1}$
    \end{enumerate} 
\end{theorem}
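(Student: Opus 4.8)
\noindent\emph{Proof plan.} Because $I_k$ is a product over the distinct first-quadrant prime divisors of $\eta$ and does not see the unit $u$ (indeed $u^k/\eta^k = 1/\prod_i \pi_i^{ke_i}$), every item reduces to understanding the single factor
\[
  I_k(\pi^n)=\frac{\pi^{k(n+1)}-1}{\pi^{kn}(\pi^k-1)}=\sum_{j=0}^{n}\pi^{-kj},
\]
the $n$-th partial sum of a geometric series with ratio $\pi^{-k}$, where $|\pi^{-k}|=|\pi|^{-k}\le 2^{-k/2}<1$ since the least norm of a Gaussian prime is $2$. I would establish the items in the order (1), (3), (4), (2), since the lower bound in (4) is intended to come out of the monotonicity asserted in (2).

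\noindent\textbf{The routine items.} Item (1) is immediate from Definition~\ref{def_I}: if $(\alpha,\beta)=1$ then the first-quadrant primes dividing $\alpha$ are disjoint from those dividing $\beta$, so the product defining $I_k(\alpha\beta)$ splits as $I_k(\alpha)I_k(\beta)$, with no estimation involved. Item (3) transplants the classical proof that prime powers are solitary over $\mathbb{Z}$: since $\sigma_k(\pi^n)=1+\pi^k+\cdots+\pi^{kn}\equiv 1\pmod{\pi}$ we have $\pi\nmid\sigma_k(\pi^n)$, so $I_k(\pi^n)=\sigma_k(\pi^n)/\pi^{kn}$ is ``in lowest terms'' in the UFD $\mathbb{Z}[i]$; if $I_k(z)=I_k(\pi^n)$ with $(z,\pi)=1$, clearing denominators gives $\sigma_k(z)\pi^{kn}=\sigma_k(\pi^n)(u^{-k}z^k)$ with $\pi\nmid u^{-k}z^k$, and unique factorization then forces $\pi^{kn}\mid u^{-k}z^k$, which is impossible for $n\ge 1$. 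The upper bound in item (4) is the triangle inequality plus a geometric series: $|I_k(\pi^n)|\le\sum_{j=0}^{n}|\pi|^{-kj}<\sum_{j=0}^{\infty}|\pi|^{-kj}=|\pi|^k/(|\pi|^k-1)$, the strictness coming from discarding the positive tail.

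\noindent\textbf{The main obstacle: item (2).} By (1) it suffices to prove, for a single prime and exponents $0\le e\le f$, that $|I_k(\pi^e)|\le|I_k(\pi^f)|$ with equality iff $e=f$; writing $x=\pi^k$ and $S_m=\sum_{j=0}^m x^{-j}=(1-x^{-(m+1)})/(1-x^{-1})$, this is equivalent to monotonicity of $m\mapsto|1-x^{-m}|$. For real $x>1$ this is trivial, but for genuinely complex $x$ one has $|1-x^{-m}|^2=1-2|x|^{-m}\cos(m\arg x)+|x|^{-2m}$, a quantity that oscillates about $1$, so monotonicity is not automatic and must exploit the arithmetic of $\pi$ rather than merely its size. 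Here I would proceed with care, because the statement as written looks too strong: already for $\pi=1+i$, $k=1$ one computes $I_1(\pi^2)=\tfrac{3-2i}{2}$ and $I_1(\pi^3)=\tfrac{5-5i}{4}$, so $|I_1(\pi^2)|=\tfrac{\sqrt{13}}{2}>\tfrac{5\sqrt{2}}{4}=|I_1(\pi^3)|$, and likewise $|I_1(\pi^5)|=\tfrac{\sqrt{130}}{8}<\sqrt{5/2}=|I_1(\pi)|$, which would also break the lower bound claimed in (4). I would therefore expect the correct form of (2), and hence of the lower bound in (4), to need an additional hypothesis --- for instance restricting to rational primes $\pi\in\mathbb{Z}$, where $x$ is real and the argument collapses, or controlling $\arg\pi$ against $|\pi|$ --- and I would regard isolating that hypothesis, and then carrying out the resulting norm estimate, as the real content of the theorem.
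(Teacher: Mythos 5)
Your handling of items (1), (3), and the upper bound in (4) is correct and essentially coincides with the paper's own arguments: (1) is the unit bookkeeping of Property \ref{prop1}, your clearing-of-denominators argument for (3) is Greening's criterion (Theorem \ref{greenings}) specialized to the coprime case actually stated in the theorem, and your geometric-series estimate is exactly the paper's proof of the right-hand inequality in Property \ref{prop4}. In fact your direct unique-factorization argument for (3) is more robust than the paper's route, since the paper proves Greening's criterion by invoking Property \ref{prop2}.

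Your skepticism about item (2), and hence about the lower bound in (4), is justified, and your counterexamples check out: for $\pi=1+i$ one has $I_1(\pi^2)=\tfrac{3-2i}{2}$ and $I_1(\pi^3)=\tfrac{5-5i}{4}$, so $|I_1(\pi^2)|=\tfrac{\sqrt{13}}{2}>\tfrac{5\sqrt{2}}{4}=|I_1(\pi^3)|$ even though $\pi^2\mid\pi^3$ and $\pi^2\nsim\pi^3$; likewise $|I_1(\pi^5)|=\tfrac{\sqrt{130}}{8}<\sqrt{5/2}=\left|\tfrac{\pi+1}{\pi}\right|$ violates the left inequality of (4). The paper's proof of Property \ref{prop2} rests on Lemma \ref{ineq2} (via Lemma \ref{ineq3} with $\eta=\pi^k$, $n=k(e_i+1)$, $m=k(e_i-f_j)$), and that lemma is itself false: for $\eta=1+i$, $n=4$ one has $|\eta^4-1|=5<\sqrt{26}=|\eta^4-\eta|$, which is precisely the instance your counterexample triggers. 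The proof of Lemma \ref{ineq2} breaks in two places: the reverse triangle inequality $\norm{a-b}\geq\norm{a}-\norm{b}$ is applied to the squared norm, where it does not hold (it holds only for the absolute value), and $\norm{(\eta^n-1)/(\eta^{n-1}-1)}$ is treated as an integer although $\eta^{n-1}-1$ need not divide $\eta^n-1$ in $\mathbb{Z}[i]$. Geometrically the failure occurs exactly when $\eta^n$ leaves the right half-plane, which inevitably happens for powers of any non-real $\eta$, so the hypothesis $Re(\eta)>0$ on $\eta$ alone cannot rescue the statement; your suggestion to restrict to primes with $\pi^k$ real (e.g.\ the inert rational primes $p\equiv 3\pmod 4$) or to control $\arg(\pi^{kn})$ is the right kind of repair. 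So the absence of a proof of item (2) in your proposal is not a gap on your side: as stated, item (2) and the left inequality of (4) are false, and the paper's later material that survives (Lemma \ref{zb3} and the nonexistence theorem) uses only the upper bound, which you did prove.
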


\subsection{Lemmas}

Here, we provide some lemmas used in subsequent sections.

\begin{lemma}\label{ineq1}
Given a complex integer $\eta \in \mathbb{Z}[i]$ where $Re(\eta) > 0$ and $|\eta| > 1$, for an integer $n > 0$, we have the following inequality:
    \begin{align*}
        |1 + \eta + \eta^2 + \dots + \eta^n| > |\eta^n|.
    \end{align*}
\end{lemma}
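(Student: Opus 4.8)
The plan is to put the geometric sum in closed form, apply the reverse triangle inequality once, and reduce the whole statement to an elementary inequality in the norm $N:=\|\eta\|$; two small exceptional cases are then dispatched by direct computation. First I record what the hypotheses give: since $\eta$ is a Gaussian integer with $Re(\eta)>0$ we in fact have $Re(\eta)\ge 1$, and since $|\eta|>1$ the integer $N=\eta\bar\eta$ satisfies $N\ge 2$ (and $N\ne 3$, as $3$ is not a sum of two squares). The case $n=1$ I would settle at once and on its own: with $\eta=a+bi$, $a\ge 1$,
\begin{align*}
    |1+\eta|^2-|\eta|^2=(1+a)^2+b^2-(a^2+b^2)=1+2a>0.
\end{align*}

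For $n\ge 2$ the hypothesis $|\eta|>1$ gives $\eta\ne 1$, so $1+\eta+\cdots+\eta^n=\frac{\eta^{n+1}-1}{\eta-1}$, and the claim becomes $|\eta^{n+1}-1|>|\eta|^n\,|\eta-1|$. By the reverse triangle inequality $|\eta^{n+1}-1|\ge|\eta|^{n+1}-1$, so it is enough to prove $|\eta|^{n+1}-1>|\eta|^n\,|\eta-1|$. Using $|\eta-1|^2=(a-1)^2+b^2=N-2a+1$, this rearranges to $N^{n/2}(\sqrt N-\sqrt{N-2a+1})>1$. Rationalizing gives $\sqrt N-\sqrt{N-2a+1}=\frac{2a-1}{\sqrt N+\sqrt{N-2a+1}}$; since $a\ge 1$ makes $N-2a+1\le N-1<N$, the denominator is less than $2\sqrt N$, so the left side exceeds $\frac{(2a-1)N^{n/2}}{2\sqrt N}\ge\frac{N^{(n-1)/2}}{2}$. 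Hence the inequality holds whenever $N^{(n-1)/2}\ge 2$.

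This last condition is automatic for $n\ge 3$ (then $(n-1)/2\ge 1$ and $N\ge 2$) and holds for $n=2$ as soon as $N\ge 4$. The only leftover case is $n=2$ with $N<4$; since $N\ge 2$ and $N\ne 3$ this forces $N=2$, i.e.\ $\eta=1\pm i$, where one checks directly that $|1+\eta+\eta^2|=|2\pm 3i|=\sqrt{13}>2=|\eta^2|$. (Alternatively, both the statement and the hypotheses are invariant under $\eta\mapsto\bar\eta$, which reduces $\eta=1-i$ to $\eta=1+i$.) Combining $n=1$, the main estimate, and $\eta=1\pm i$ with $n=2$ finishes the proof.

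The main obstacle is really seeing why a one-line triangle-inequality argument does not suffice: $Re(\eta^m)$ can be negative for $m\ge 2$ even when $Re(\eta)>0$ — e.g.\ $(1+2i)^2=-3+4i$ — so expanding $|1+\eta+\cdots+\eta^n|^2$ about $|\eta^n|^2$ leaves cross terms $Re(\eta^j)$ of indefinite sign; and the naive induction on $n$ does not close, because the base-case surplus $|1+\eta|^2-|\eta|^2=1+2Re(\eta)$ is only $O(1)$, far too small to dominate those cross terms when $\|\eta\|$ is large. Passing to the closed form and using the reverse triangle inequality avoids both difficulties; the single ad hoc point is that this last step is wasteful precisely when $N=2$, which is why the pair $\eta=1\pm i$, $n=2$ must be verified by hand.
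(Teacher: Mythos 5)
Your proof is correct, and it is genuinely different from what the paper does: the paper does not prove this lemma at all, it simply defers to Spira's 1961 article, so your argument supplies a self-contained proof where the paper has only a citation. Your route — writing the sum as $\frac{\eta^{n+1}-1}{\eta-1}$, applying the reverse triangle inequality, and reducing to $N^{n/2}\left(\sqrt{N}-\sqrt{N-2a+1}\right)>1$ with $N=\|\eta\|$, $a=Re(\eta)\ge 1$ — checks out: the rationalization $\sqrt{N}-\sqrt{N-2a+1}=\frac{2a-1}{\sqrt{N}+\sqrt{N-2a+1}}>\frac{2a-1}{2\sqrt{N}}$ is valid because $a\ge1$, the resulting criterion $N^{(n-1)/2}\ge 2$ covers all $n\ge 3$ (since $N\ge 2$) and all $n=2$ with $N\ge 4$, and your use of integrality of the norm ($N\ge 2$ and $N\ne 3$, as $3$ is not a sum of two squares) correctly isolates the single exceptional pair $\eta=1\pm i$, $n=2$, which you verify directly ($\sqrt{13}>2$), with $n=1$ handled separately by $|1+\eta|^2-|\eta|^2=1+2a>0$. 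What your approach buys is exactly what the paper lacks here: an elementary, verifiable argument in place of an external reference, at the modest cost of a small case analysis at $N=2$, which is precisely where the reverse triangle inequality is too lossy; it also makes transparent that only $Re(\eta)\ge 1$ and the integrality of the norm are used, which fits the paper's closing remark about generalizing to other imaginary quadratic rings with integral norm.
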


\begin{proof}
The proof of the given lemma is described in detail by Spira \cite{Spira_1961}.
\end{proof}

\begin{lemma}\label{ineq2}
Given a complex integer $\eta \in \mathbb{Z}[i]$ where $Re(\eta) > 0$ and $|\eta| > 1$, for an integer $n > 0$, we have the following inequality:
    \begin{align*}
        |\eta^n-1| > |\eta^n - \eta|.
    \end{align*}
\end{lemma}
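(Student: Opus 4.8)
The plan is to reduce the claimed inequality $|\eta^n - 1| > |\eta^n - \eta|$ to Lemma \ref{ineq1}, which is already available. The key algebraic observation is the factorization
\begin{align*}
    \frac{\eta^n - 1}{\eta - 1} = 1 + \eta + \eta^2 + \dots + \eta^{n-1},
\end{align*}
valid since $|\eta| > 1$ forces $\eta \neq 1$, and similarly $\eta^n - \eta = \eta(\eta^{n-1} - 1)$. First I would handle the trivial case $n = 1$ separately: there $|\eta^n - \eta| = 0 < |\eta - 1|$, the latter being nonzero since $|\eta| > 1$ rules out $\eta = 1$. So assume $n \geq 2$.

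Next I would write both sides as products. On the left, $\eta^n - 1 = (\eta - 1)(1 + \eta + \dots + \eta^{n-1})$. On the right, $\eta^n - \eta = \eta(\eta^{n-1} - 1) = \eta(\eta - 1)(1 + \eta + \dots + \eta^{n-2})$. Taking absolute values and cancelling the common nonzero factor $|\eta - 1|$, the desired inequality becomes
\begin{align*}
    |1 + \eta + \dots + \eta^{n-1}| > |\eta| \cdot |1 + \eta + \dots + \eta^{n-2}|.
\end{align*}
Now I would invoke Lemma \ref{ineq1} with exponent $n-1$: since $\operatorname{Re}(\eta) > 0$ and $|\eta| > 1$, it gives $|1 + \eta + \dots + \eta^{n-1}| > |\eta^{n-1}| = |\eta|^{n-1}$. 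It therefore suffices to show $|\eta|^{n-1} \geq |\eta| \cdot |1 + \eta + \dots + \eta^{n-2}|$, i.e. $|\eta|^{n-2} \geq |1 + \eta + \dots + \eta^{n-2}|$. But this last inequality may fail in general — Lemma \ref{ineq1} points the other way — so a cruder bound is not enough, and this is where the argument needs care.

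The fix, and what I expect to be the main obstacle, is to compare the two geometric-like sums directly rather than through the coarse bound $|\eta^{n-1}|$. I would instead write $1 + \eta + \dots + \eta^{n-1} = \eta^{n-1} + (1 + \eta + \dots + \eta^{n-2})$ and set $S = 1 + \eta + \dots + \eta^{n-2}$, so the target is $|\eta^{n-1} + S| > |\eta| \cdot |S|$. Using the triangle inequality in the form $|\eta^{n-1} + S| \geq |\eta^{n-1}| - |S| \cdot$(something) is too lossy; instead I would exploit that $\operatorname{Re}(\eta) > 0$ keeps all the partial sums in a controlled half-plane-like region, so that $\eta^{n-1}$ and $S$ do not point in opposite directions. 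Concretely, the cleanest route is probably to go back to $\eta^n - 1$ and $\eta^n - \eta$ as points in the plane and argue geometrically: $|\eta^n - 1| > |\eta^n - \eta|$ says $\eta^n$ is strictly closer to $\eta$ than to $1$, which is equivalent to $\operatorname{Re}\big((\eta^n)(\overline{\eta - 1})\big) > \tfrac{1}{2}(|\eta|^2 - 1)$, i.e. to a statement about which side of the perpendicular bisector of the segment $[1, \eta]$ the point $\eta^n$ lies on. Establishing that inequality by expanding $\operatorname{Re}(\eta^{n+1}) - \operatorname{Re}(\eta^n) > \tfrac12(|\eta|^2-1)$ and using $|\eta| > 1$ together with $\operatorname{Re}(\eta) > 0$ (which controls the growth of $\operatorname{Re}(\eta^m)$ relative to $|\eta|^m$) is the technical heart of the proof; alternatively one squares both original sides, reducing to $2\operatorname{Re}(\eta^n \overline{\eta}) - 2\operatorname{Re}(\eta^n) > |\eta|^2 - 1$ after cancellation, and finishes with Lemma \ref{ineq1}-type estimates. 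I would present whichever of these two equivalent reductions turns out to admit the shortest honest estimate.
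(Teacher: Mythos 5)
Your proposal is not a complete proof: after the (correct) factorization that reduces the claim to $|1+\eta+\dots+\eta^{n-1}| > |\eta|\,|1+\eta+\dots+\eta^{n-2}|$, and the (correct) observation that Lemma \ref{ineq1} bounds the wrong side, you only sketch two candidate strategies --- the perpendicular-bisector reformulation, and the squared form $2\,\mathrm{Re}(\eta^n\bar{\eta})-2\,\mathrm{Re}(\eta^n) > |\eta|^2-1$ --- and defer the ``technical heart'' to whichever admits an estimate. Neither can be carried out, because the lemma is false as stated. Take $\eta = 1+2i$ (so $\mathrm{Re}(\eta)>0$ and $|\eta|=\sqrt{5}>1$; it is even a first-quadrant Gaussian prime) and $n=3$: then $\eta^3=-11-2i$, so $|\eta^3-1|^2 = |-12-2i|^2 = 148$ while $|\eta^3-\eta|^2 = |-12-4i|^2 = 160$, hence $|\eta^3-1| < |\eta^3-\eta|$. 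In your squared form, $2\,\mathrm{Re}(\eta^3\bar{\eta})-2\,\mathrm{Re}(\eta^3) = -8$, which is not greater than $|\eta|^2-1=4$. The precise point where your sketch would collapse is the parenthetical claim that $\mathrm{Re}(\eta)>0$ ``controls the growth of $\mathrm{Re}(\eta^m)$'': it does not, since powers of $\eta$ rotate out of the right half-plane (here $\mathrm{Re}(\eta^3)=-11$). The inequality does hold for $n=1$ and $n=2$ (your arguments for those cases are fine) and for real $\eta>1$, but not for general $\eta$ with $\mathrm{Re}(\eta)>0$ and $n\ge 3$.

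For comparison, the paper's own proof of the case $n>2$ fails at the same spot: it asserts that $\left\|\tfrac{\eta^n-1}{\eta^{n-1}-1}\right\|$ is an integer (it need not be, since $\eta^{n-1}-1$ does not divide $\eta^n-1$; in the example above this quantity is $148/32$), and it applies the reverse triangle inequality to norms $\left\|\cdot\right\|$ rather than to absolute values, which is not valid. So your instinct that this step ``needs care'' was the right one; the honest conclusion is that no estimate closes the gap, and downstream uses of Lemma \ref{ineq2} (via Lemma \ref{ineq3} in Property \ref{prop2}) are affected --- e.g.\ $\alpha=(1+2i)^2$, $\beta=1+2i$ give $|I_1(\alpha)|^2 = 37/25 < 8/5 = |I_1(\beta)|^2$ despite $\beta\mid\alpha$.
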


\begin{proof}
We first prove the case $n>2$ and then take the cases $n = 2$ and $n = 1$ individually.\\
We prove the given lemma for the norm of each side of the inequality which also holds true for absolute values. Consider the following expression:
    \begin{align*}
        \norm{\frac{\eta^n-1}{\eta^{n-1}-1}} = \norm{\eta - \frac{1-\eta}{\eta^{n-1} - 1}} \geq \norm{\eta} - \norm{\frac{1-\eta}{\eta^{n-1} - 1}}.
    \end{align*}

However, we observe that the left hand side is an integer and $\norm{\eta}$ is an integer. Further, making use of Lemma \ref{ineq1} and the bound $n > 2$, we observe that:

    \begin{align*}
        \norm{\frac{1-\eta}{\eta^{n-1} - 1}} &= \norm{\frac{1-\eta}{(\eta - 1)(1 + \eta  + \eta^2 + \dots + \eta^{n-2})}} \\ &= \norm{\frac{1}{1 + \eta  + \eta^2 + \dots + \eta^{n-2}}} \leq \norm{\frac{1}{\eta^{n-2}}} < 1\\
        \implies \norm{\frac{1-\eta}{\eta^{n-1} - 1}} &< 1.
    \end{align*}

Hence, we can neglect this term as it is less than 1 and the other terms are integers. Thus, the previous inequality simplifies to:

    \begin{align*}
        \norm{\frac{\eta^n-1}{\eta^{n-1}-1}} \geq \norm{\eta} \implies \norm{\eta^n-1} \geq \norm{\eta^n-\eta}.
    \end{align*}

This proves the first case of the inequality. We now show that the given inequality holds for $n=2$ and $n=1$.

For $n=2$:
    \begin{align*}
         \left|\frac{\eta^2-1}{\eta-1}\right| = |\eta + 1| > |\eta| \implies |\eta^2-1| > |\eta||\eta - 1| = |\eta^2 - \eta|,
    \end{align*}
which follows from $Re(\eta) > 0$. 

For $n=1$:
    \begin{align*}
        |\eta-1| > |\eta - \eta| = 0.
    \end{align*}

Hence, the given inequality holds for $n>1$.
\end{proof}

\begin{lemma}\label{ineq3}
Given a complex integer $\eta \in \mathbb{Z}[i]$ where $Re(\eta) > 0$ and $|\eta| > 1$, for $n \geq m \geq 0$, we have the following inequality:
    \begin{align*}
        |\eta^n-1| \geq |\eta^n - \eta^m|,
    \end{align*}
    with equality when $m=0$
\end{lemma}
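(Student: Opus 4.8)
The plan is to reduce the general statement to Lemma \ref{ineq2} by factoring out the appropriate power of $\eta$. First I would dispose of the boundary case $m = 0$: there the inequality reads $|\eta^n - 1| \geq |\eta^n - 1|$, which is an equality, so this case is trivial and accounts for the ``equality when $m=0$'' clause. I would also note the degenerate subcase $n = m$ (with $m \geq 1$), where the right side is $|\eta^n - \eta^n| = 0$ and the inequality holds since $|\eta^n - 1| > 0$ because $|\eta| > 1$.

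For the main case $n > m \geq 1$, the key step is the factorization $\eta^n - \eta^m = \eta^{m-1}(\eta^{n-m+1} - \eta)$, so that $|\eta^n - \eta^m| = |\eta|^{m-1}\,|\eta^{n-m+1} - \eta|$. Setting $j = n - m + 1$, we have $j \geq 2 > 1$, and Lemma \ref{ineq2} (which holds for all exponents $> 1$) gives $|\eta^{j} - \eta| < |\eta^{j} - 1|$. Thus it remains to show $|\eta|^{m-1}\,|\eta^{j} - 1| \leq |\eta^n - 1|$, i.e. to compare $|\eta|^{m-1}|\eta^{n-m+1} - 1|$ with $|\eta^n - 1|$. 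I would handle this by the telescoping/geometric-series identity: writing $\eta^n - 1 = (\eta^{n-m+1}-1)\bigl(1 + \eta^{n-m+1} + \cdots\bigr)$ does not quite work since $n-m+1$ need not divide $n$, so instead I would argue directly via
\begin{align*}
    |\eta^n - 1| \geq |\eta|^{n} - 1 \geq |\eta|^{m-1}\bigl(|\eta|^{n-m+1} - 1\bigr) \geq |\eta|^{m-1}\,|\eta^{n-m+1} - 1| \cdot c,
\end{align*}
where the middle inequality $|\eta|^n - 1 \geq |\eta|^{m-1}(|\eta|^{n-m+1}-1)$ is just $|\eta|^{m-1} \geq 1$ (true since $|\eta| > 1$, $m \geq 1$), and $|\eta^{n-m+1} - 1| \leq |\eta|^{n-m+1} + 1$ is too weak — so the real content is to combine the strict slack from Lemma \ref{ineq2} with these elementary bounds.

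The main obstacle I anticipate is precisely stitching together the strict inequality of Lemma \ref{ineq2} with the crude modulus bounds so that nothing is lost: $|\eta^{j}-1|$ can be as large as $|\eta|^{j}+1$, which exceeds $|\eta|^{j}-1$, so one cannot naively chain $|\eta|^{m-1}|\eta^{j}-1|$ against $|\eta|^{n}-1$. The clean way around this is to avoid passing to real moduli at all and instead iterate Lemma \ref{ineq2} directly: prove by induction on $m$ that $|\eta^n - 1| \geq |\eta^n - \eta^m|$, using at each step $|\eta^{n} - \eta^{m}| = |\eta|\,|\eta^{n-1} - \eta^{m-1}|$ together with the bound from Lemma \ref{ineq2} applied to exponent $n - m + 1$ — or, most simply, observe $\eta^n - \eta^m = \eta^m(\eta^{n-m} - 1)$ and $\eta^n - \eta^{m-1} = \eta^{m-1}(\eta^{n-m+1} - 1)$, reducing everything to the single comparison already established in Lemma \ref{ineq2}. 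I would write up the telescoping version, since it is the most transparent and keeps the argument entirely within $\mathbb{Z}[i]$-arithmetic rather than real estimates.
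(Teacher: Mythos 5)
Your final argument---factoring $\eta^n-\eta^m=\eta^{m-1}(\eta^{n-m+1}-\eta)$, applying Lemma \ref{ineq2} to the exponent $n-m+1$, and descending by induction on $m$ through the chain $|\eta^n-\eta^m|\leq|\eta^n-\eta^{m-1}|\leq\dots\leq|\eta^n-1|$---is correct and is essentially identical to the paper's proof. The intermediate detours through real-modulus estimates are rightly discarded, so no changes are needed beyond trimming them.
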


\begin{proof}
    If $m = 0$, then it results in simple equality.
    
    In the case of $m>0$, the given result can be proved by repeated application of Lemma \ref{ineq2}:
    \begin{align*}
        |\eta^n - \eta^m| &= |\eta^{m-1}\|\eta^{n-m+1} - \eta| \leq |\eta^{m-1}\|\eta^{n-m+1} - 1|\\ &\leq |\eta^n - \eta^{m-1}|.
    \end{align*}
    By induction on $m$, we obtain the desired result.
\end{proof}

\section{Proofs of Properties of $I_k(\eta)$}
\begin{property}
    \label{prop1}
    For $\alpha$ $\beta \in \mathbb{Z}[i]$ where $(\alpha, \beta) = 1$, $I_k(\alpha \beta) = I_k(\alpha)I_k(\beta)$, i.e., $I_k(\alpha)$ is weakly multiplicative.
\end{property}
\begin{proof}
    From Definition \ref{def_I}, we have
    \begin{align*}
        I_k(\alpha\beta) = \frac{u^k\sigma_k(\alpha\beta)}{(\alpha\beta)^k} = u^k\frac{\sigma_k(\alpha)}{\alpha^k} \frac{\sigma_k(\beta)}{\beta^k},
    \end{align*}
    where $u$ is the unit corresponding to $\alpha\beta$. Let $v, w$ be the units corresponding $\alpha$, and $\beta$ respectively. Observe that $\alpha\beta = u\alpha' v\beta' = (uv)(\alpha'\beta')$, where $\alpha' \sim \alpha, \beta' \sim \beta$ such that $\alpha'$ and $\beta'$ are products of primes only in the first quadrant. Clearly, $\alpha'\beta'$ is a product of primes in the first quadrant and $vw$ is also a unit. Hence, we can set $u = vw$ by defenition. Thus, $u^k = v^kw^k$. Substituting:
    
    \begin{align*}
        I_k(\alpha\beta) = \frac{v^k\sigma_k(\alpha)}{\alpha^k} \frac{w^k\sigma_k(\beta)}{\beta^k} = I_k(\alpha)I_k(\beta).
    \end{align*}
    
\end{proof}

\begin{property}\label{prop2}
    For $\alpha, \beta \in \mathbb{Z}[i]$ such that  $\beta|\alpha$, $|I_k(\alpha)| \geq |I_k(\beta)|$ with equality only when $\alpha \sim \beta$.
\end{property}

\begin{proof}
    Let the prime factorization of $\alpha$ be $u\prod \pi_i^{e_i}$ and that of $\beta$ be $v\prod\rho_j^{f_j}$. Since $\beta | \alpha$, $\{\rho_j\} \subseteq \{\pi_i \}$ and for any $\pi_i = \rho_j \implies e_i \geq f_j$. 
    
    Now, we consider the following expression:
    
    \begin{align*}
        \left|\frac{I_k(\alpha)}{I_k(\beta)}\right| = \left|\left(\prod \frac{\pi_i^{k(e_i+1)}-1}{\pi_i^{ke_i}(\pi_i^k-1)}\right) \middle/ \left(\prod \frac{\rho_j^{k(f_j+1)}-1}{\rho_j^{kf_j}(\rho_j^k-1)}\right)\right|.
    \end{align*}
    
    Since we aim to prove that this expression is greater than one, we observe the terms corresponding to primes not included in $\{\rho_j\}$ increase the value of the result by a factor greater than one. Thus, they can be neglected and the equality can be replaced with a greater than or equal to. In the remaining terms, if $e_i = f_j$ for some pair of primes in the two sets, we observe that the terms in the numerator and denominator cancel. Hence, we need only consider the cases where $e_i \geq f_j$ in the set $\{\rho_j\}$ with $\pi_i = \rho_j$.
    
    Hence, our original expression reduces to the following:
    
    \begin{align*}
        \left|\frac{I_k(\alpha)}{I_k(\beta)}\right| \geq \left| \prod \frac{\rho_j^{k(e_i+1)}-1}{\rho^{k(e_i-f_j)}(\rho_i^{k(f_j+1)}-1)}\right|.
    \end{align*}
    
    However, using Lemma \ref{ineq3} with $n = k(e_i+1)$ and $m = k(e_i-f_j)$, we have the following:
    \begin{align*}
        \left| \frac{\rho_j^{k(e_i+1)}-1}{\rho^{k(e_i-f_j)}(\rho_i^{k(f_j+1)}-1)}\right| = \frac{|\rho_j^{k(e_i+1)}-1|}{|\rho_i^{k(e_i+1)}-\rho^{k(e_i-f_j)}|} \geq 1,
    \end{align*}
    with equality when $m = k(e_i-f_j) = 0\implies e_i = f_j$.
    Substituting into the previous equation:
    \begin{align*}
        \left|\frac{I_k(\alpha)}{I_k(\beta)}\right| \geq \left| \prod \frac{\rho_j^{k(e_i+1)}-1}{\rho^{k(e_i-f_j)}(\rho_i^{k(f_j+1)}-1)}\right| \geq 1 .
    \end{align*}
    with equality when $e_i = f_j$ for all the primes $\rho_j$. Thus, equality occurs only when $\alpha$ and $\beta$ have the same factorization, i.e. when $\alpha \sim \beta$.
    
    Hence, $|I_k(\alpha)| \geq |I_k(\beta)|$ with equality when $\alpha \sim \beta$.
\end{proof}

We prove the third property as a corollary of a more general theorem:
\begin{theorem}[\textbf{Greening's Criterion}]\label{greenings}
    For $\eta\in \mathbb{Z}[i]$, if $(\eta^k, \sigma_k(\eta)) = 1$ for a given integer $k$ then $\eta$ is $k$-powerful solitary, i.e., for all $z \in \mathbb{Z}[i]$ and $z \nsim \eta$, $I_k(\eta) \neq I_k(z)$ \cite{Loomis_2015}.
\end{theorem}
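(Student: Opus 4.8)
The plan is to argue by contradiction: from an equality of abundancy indices I will extract a divisibility relation, and then contradict the strict monotonicity of $|I_k|$ recorded in Property \ref{prop2}. Suppose some $z \in \mathbb{Z}[i]$ with $z \nsim \eta$ satisfies $I_k(z) = I_k(\eta)$. Write $\eta = u\prod\pi_i^{e_i}$ and $z = w\prod\rho_j^{f_j}$ with $u,w$ units and all primes in the first quadrant. By Definition \ref{def_I}, $I_k(\eta) = u^k\sigma_k(\eta)/\eta^k$ and $I_k(z) = w^k\sigma_k(z)/z^k$, and since $|\pi| > 1$ for every Gaussian prime, Lemma \ref{ineq1} guarantees that each $\sigma_k$-value is a \emph{nonzero} element of $\mathbb{Z}[i]$. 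Hence clearing denominators in $I_k(z) = I_k(\eta)$ is legitimate and yields the identity
\begin{align*}
    u^k\sigma_k(\eta)\,z^k = w^k\sigma_k(z)\,\eta^k
\end{align*}
in $\mathbb{Z}[i]$.

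Next I would deduce $\eta \mid z$. Since $\eta^k$ divides the right-hand side, it divides the left-hand side $u^k\sigma_k(\eta)z^k$; but $u^k$ is a unit and $(\eta^k,\sigma_k(\eta)) = 1$ (equivalently $(\eta,\sigma_k(\eta)) = 1$, as $\eta$ and $\eta^k$ have the same prime divisors), so unique factorization in $\mathbb{Z}[i]$ forces $\eta^k \mid z^k$. Comparing the exponent of each Gaussian prime on the two sides and using $k \geq 1$, this immediately upgrades to $\eta \mid z$. Now Property \ref{prop2}, applied with $\beta = \eta$ and $\alpha = z$, yields $|I_k(z)| \geq |I_k(\eta)|$ with equality only if $z \sim \eta$. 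Since $I_k(z) = I_k(\eta)$ forces $|I_k(z)| = |I_k(\eta)|$, we conclude $z \sim \eta$, contradicting the choice of $z$. Therefore no such $z$ exists and $\eta$ is $k$-powerful solitary.

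The third listed property of the main theorem then drops out as the special case $\eta = \pi^n$: here $\sigma_k(\pi^n) = 1 + \pi^k + \dots + \pi^{nk} \equiv 1 \pmod{\pi}$, so $\pi \nmid \sigma_k(\pi^n)$ and hence $(\pi^{nk},\sigma_k(\pi^n)) = 1$; Greening's Criterion then shows $\pi^n$ is solitary, which is stronger than (and so implies) the claim restricted to $z$ coprime to $\pi^n$.

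I expect the genuine content of the argument to be carried entirely by Property \ref{prop2} and the UFD structure of $\mathbb{Z}[i]$, so the real care needed is bookkeeping: verifying that the unit powers $u^k,w^k$ truly disappear from the coprimality step, that $\sigma_k(\eta)$ and $z$ are nonzero so the clearing of denominators is valid, and that the passage $\eta^k \mid z^k \Rightarrow \eta \mid z$ legitimately uses $k \geq 1$. The one place where something could go wrong is the threading of the hypothesis $(\eta^k,\sigma_k(\eta)) = 1$ through the cross-multiplied equation, which is why I would isolate that deduction rather than fold it into the computation.
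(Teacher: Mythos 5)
Your proof is correct and follows essentially the same route as the paper's: cross-multiply the equality $I_k(z) = I_k(\eta)$, use the hypothesis $(\eta^k,\sigma_k(\eta)) = 1$ together with unique factorization to conclude $\eta^k \mid z^k$ and hence $\eta \mid z$, and then invoke Property \ref{prop2} to force $z \sim \eta$, contradicting the assumption. Your write-up is somewhat more careful than the paper's about the units and the legitimacy of clearing denominators, but the underlying argument is identical.
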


\begin{proof}
     We shall prove the above result by contradiction. Let there, in fact, exist $z \in \mathbb{Z}[i]$, $z \nsim \eta$, such that $I_k(\eta) = I_k(z)$. Then:
    
    \begin{align*}
        z^ku^k\sigma_k(\eta^n) = \eta^kv^k\sigma_k(z),
    \end{align*}

    where $u$ and $v$ are the units corresponding to $\eta$ and $z$ respectively. We know that $(\eta^k, \sigma_k(\eta)) = 1$. Since, $\eta^k | z^k\sigma_k(\eta)$ and $(\eta^k, \sigma_k(\eta)) = 1$, $\eta^k | z^k \implies \eta | z$ (by comparing the power of every prime $\pi$ in the factorization of $z$ and $\eta$). Since $\pi \nsim z$, we have (from Property \ref{prop2}), $I_k(\eta) < I_k(z)$. This contradicts our initial assumption. Thus, $\eta$ is $k$-powerful solitary.
\end{proof}

\begin{property}\label{prop3}
    For all primes $\pi \in \mathbb{Z}[i]$, $I_k(\pi^n) \neq I_k(z)$ for all possible $z \in \mathbb{Z}[i]$ and $z \nsim \pi^n$, i.e., all powers of primes are solitary.
\end{property}

\begin{proof}
    Clearly, $\sigma_k(\pi^n) = 1 + \pi^k + \pi^{2k} \dots + \pi^{kn} = 1 + \pi^kl$ for $l = 1 + \pi^k + \pi^{2k} \dots + \pi^{k(n-1)} \in \mathbb{Z}[i]$. Hence, $\pi^k \nmid \sigma_k(\pi^n)$. Thus, $(\pi^{kn}, \sigma_k(\pi^n)) = 1$. Hence, by Greening's Criterion (Theorem \ref{greenings}), all powers of primes are solitary.
\end{proof}

\begin{corollary}\label{corr2}
    For all primes $\pi \in \mathbb{Z}[i]$, $\pi^{n_1}\bar{\pi}^{n_2}$ is $k$-powerful solitary for all $n_1, n_2, k \in \mathbb{Z}$, i.e., there exists no value of $z \in \mathbb{Z}[i]$ such that $I_k(\pi^{n_1}\bar{\pi}^{n_2}) = I_k(z)$.
\end{corollary}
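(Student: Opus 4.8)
The plan is to reduce the statement to Greening's Criterion (Theorem~\ref{greenings}), in the same spirit as Property~\ref{prop3}, after first separating the cases in which $\pi$ and $\bar\pi$ are not genuinely distinct primes. I would begin by noting that $I_k$ is invariant under associates: if $\alpha=t\beta$ with $t$ a unit and $\beta=u\prod\pi_i^{e_i}$, then $\alpha=(tu)\prod\pi_i^{e_i}$, and since $\sigma_k$ depends only on the prime-power part, $I_k(\alpha)=(tu)^k\sigma_k(\alpha)/\alpha^k=t^ku^k\sigma_k(\beta)/(t^k\beta^k)=I_k(\beta)$. Hence the notion of being $k$-powerful solitary is well defined on associate classes, and it is enough to treat one representative of $\pi^{n_1}\bar\pi^{n_2}$ for each $n_1,n_2,k$.

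I would then split into two cases. If $\pi\sim\bar\pi$ --- which happens precisely when $\pi=1+i$ or $\pi$ is a rational prime $q\equiv 3\pmod 4$ --- or if $n_1=0$ or $n_2=0$, then $\pi^{n_1}\bar\pi^{n_2}$ is an associate of a single prime power ($\pi^{n_1+n_2}$ when $\pi\sim\bar\pi$, and $\pi^{n_1}$ or $\bar\pi^{n_2}$ when $n_2=0$ or $n_1=0$), which is $k$-powerful solitary by Property~\ref{prop3}; together with associate-invariance this disposes of these cases. In the remaining case $\pi\nsim\bar\pi$ (so $\norm{\pi}=p\equiv 1\pmod 4$) with $n_1,n_2\ge 1$, write $\eta=\pi^{n_1}\bar\pi^{n_2}$; now $\pi$ and $\bar\pi$ are coprime, so by weak multiplicativity (Property~\ref{prop1}), $\sigma_k(\eta)=\sigma_k(\pi^{n_1})\,\sigma_k(\bar\pi^{n_2})$. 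To apply Theorem~\ref{greenings} I must verify $(\eta^k,\ \sigma_k(\pi^{n_1})\sigma_k(\bar\pi^{n_2}))=1$; since the only primes dividing $\eta^k$ are $\pi$ and $\bar\pi$, this reduces to the four divisibilities $\pi\nmid\sigma_k(\pi^{n_1})$, $\pi\nmid\sigma_k(\bar\pi^{n_2})$, $\bar\pi\nmid\sigma_k(\pi^{n_1})$, $\bar\pi\nmid\sigma_k(\bar\pi^{n_2})$. The first and last are immediate from the identity used in the proof of Property~\ref{prop3}: $\sigma_k(\pi^{n_1})=1+\pi^k(1+\pi^k+\cdots+\pi^{k(n_1-1)})\equiv 1\pmod\pi$, and symmetrically for $\bar\pi$.

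The main obstacle is the remaining pair, $\pi\nmid\sigma_k(\bar\pi^{n_2})$ and $\bar\pi\nmid\sigma_k(\pi^{n_1})$. One cannot argue cheaply that $\pi\nmid\bar\pi$ forces $\pi$ to miss the sum $1+\bar\pi^k+\cdots+\bar\pi^{kn_2}$: reduced modulo $\pi$ this becomes a geometric series in the residue field $\mathbb{Z}[i]/(\pi)\cong\mathbb{F}_p$, and it vanishes precisely when the multiplicative order $d$ of $\bar\pi^{\,k}$ in $\mathbb{F}_p^{\times}$ divides $n_2+1$. Ruling this out is where the real work lies. My plan would be to compute the exact $\pi$-adic valuation $v_\pi(\sigma_k(\bar\pi^{n_2}))=v_\pi\!\left(\bar\pi^{\,k(n_2+1)}-1\right)-v_\pi\!\left(\bar\pi^{\,k}-1\right)$ by a lifting-the-exponent argument (legitimate since $\pi$ has odd norm), or equivalently to pass to norms via $\norm{\sigma_k(\bar\pi^{n_2})}=\sigma_k(\bar\pi^{n_2})\,\overline{\sigma_k(\bar\pi^{n_2})}=\sigma_k(\bar\pi^{n_2})\,\sigma_k(\pi^{n_2})=\sigma_k\!\left(p^{n_2}\right)$ (the last $\sigma_k$ taken in $\mathbb{Z}[i]$, using coprimality of $\pi$ and $\bar\pi$) and to track the rational prime $p$ there. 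If this valuation can in fact be positive, the black-box appeal to Theorem~\ref{greenings} has to be abandoned and replaced by a direct argument: supposing $I_k(z)=I_k(\eta)$ with $z\nsim\eta$, expand $z^ku^k\sigma_k(\eta)=\eta^kv^k\sigma_k(z)$, track the $\pi$- and $\bar\pi$-adic valuations of both sides to pin down the $\pi$- and $\bar\pi$-part of $z$, and close the argument with the strict magnitude inequality of Property~\ref{prop2}. I expect that last bookkeeping step to be the delicate point.
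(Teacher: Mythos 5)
Your setup coincides with the paper's strategy: associate invariance, the split into the case $\pi\sim\bar\pi$ (or a vanishing exponent), multiplicativity of $\sigma_k$, and the reduction of the Greening hypothesis $(\eta^k,\sigma_k(\eta))=1$ to four non-divisibility statements, two of which are immediate from $\sigma_k(\pi^{n_1})\equiv 1 \bmod \pi$. But you leave the decisive pair, $\bar\pi\nmid\sigma_k(\pi^{n_1})$ and $\pi\nmid\sigma_k(\bar\pi^{n_2})$, unproven: what you offer there is a menu of possible attacks (lifting the exponent, passing to norms via $\norm{\sigma_k(\bar\pi^{n_2})}=\sigma_k(\pi^{n_2})\sigma_k(\bar\pi^{n_2})$) together with a fallback sketch whose ``delicate bookkeeping'' is not carried out. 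Since everything in the corollary beyond Property \ref{prop3} lives in exactly that step, the proposal as written is not a proof.

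Your suspicion about that step is, however, well founded, and in fact it cannot be closed in the generality needed. Take $\pi=2+i$, $k=1$, $n_1=1$: then $\sigma_1(\pi)=3+i=(1+i)(2-i)$, so $\bar\pi\mid\sigma_k(\pi^{n_1})$, and for $\eta=\pi\bar\pi^{n_2}$ (for instance $\eta=5$, where $\sigma(5)=(3+i)(3-i)=10$ and $(5,10)=5$) the hypothesis of Greening's criterion simply fails; no valuation computation can restore it, so your fallback direct argument would genuinely be required, and it is the part you did not supply. It is worth knowing that the paper's own proof founders at the same spot: it excludes this divisibility by asserting that $\norm{\pi^{-1}+\pi^{k-1}+\cdots+\pi^{kn_1-1}}$ can never be an integer ``since $|\pi|>1$ and $k\geq 1$,'' and the example above refutes that assertion, since $\norm{(1+\pi)/\pi}=10/5=2$. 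So your diagnosis of where the real difficulty lies is more accurate than the paper's treatment of it, but neither your proposal nor the paper's argument actually establishes the statement in the case $\pi\nsim\bar\pi$.
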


\begin{proof}
    In the case that $\pi \sim \bar{\pi}$, the above result is equivalent to Property \ref{prop3}. In the case that $\pi \nsim \bar{\pi}$, we shall prove the above result by contradiction.\\
    Let $|((\pi^{n_1}\bar{\pi}^{n_2})^k, \sigma_k(\pi^{n_1}\bar{\pi}^{n_2}))| > 1$. We observe that:
    \begin{align*}
        \sigma_k(\pi^{n_1}\bar{\pi}^{n_2}) = \sigma_k(\pi^{n_1})\sigma_k(\bar{\pi}^{n_2}).
    \end{align*}
    As proved in Property \ref{prop3}, $(\pi^{kn_1}, \sigma_k(\pi^{n_1})) = (\bar{\pi}^{kn_2}, \sigma_k(\bar{\pi}^{n_2})) = 1$. Hence, \\ $|(\bar{\pi}^{kn_2}, \sigma_k(\pi^{n_1}))| > 1$ or $|(\pi^{kn_1}, \sigma_k(\bar{\pi}^{n_2}))| > 1$. We take the first case: the other case follows in the same manner. This means that $\bar{\pi} | \sigma_k(\pi^{n_1})$. Expanding, we have:
    \begin{align*}
        \bar{\pi} | \left( 1 + \pi^k + \pi^{2k} + \dots + \pi^{kn_1} \right)
        &\implies \norm{\bar{\pi}} | \norm{1 + \pi^k + \pi^{2k} + \dots + \pi^{kn_1}} \\
        &\implies \frac{\norm{1 + \pi^k + \pi^{2k} + \dots + \pi^{kn_1}}}{\norm{\pi}} \in \mathbb{Z}\\
        &\implies \norm{\pi^{-1} + \pi^{k-1} + \pi^{2k-1} + \dots + \pi^{kn_1-1}} \in \mathbb{Z}.
    \end{align*}
    However, since $|\pi| > 1$ and $k\geq 1$, we have:
    \begin{align*}
        \norm{\pi^{-1} + \pi^{k-1} + \pi^{2k-1} + \dots + \pi^{kn_1-1}} \notin \mathbb{Z}.
    \end{align*}
    
    Hence, $|((\pi^{n_1}\bar{\pi}^{n_2})^k, \sigma_k(\pi^{n_1}\bar{\pi}^{n_2}))| = 1$. Thus, by applying Greening's Criterion (Theorem \ref{greenings}), we have the desired result.
\end{proof}

\begin{property}\label{prop4}
    For all primes $\pi \in \mathbb{Z}[i]$, $\left|\frac{\pi^k+1}{\pi^k}\right| \leq |I_k(\pi^n)| < \frac{|\pi|^k}{|\pi_i|^k-1}$
\end{property}

\begin{proof}
    From Property \ref{prop2}, we have:
    \begin{align*}
        \pi|\pi^n \implies |I_k(\pi)| \leq |I_k(\pi^n)| \implies \left|\frac{\pi^k+1}{\pi^k}\right| \leq |I_k(\pi^n)|.
    \end{align*}
    This proves the left side of the inequality. 
    
    From Equation \ref{eqn1}, we observe:
    \begin{align*}
        |I_k(\pi^n)| &= |\sigma_{-k}(\pi^n)| = \left|\sum_{i = 0}^{n} \frac{1}{\pi^{ik}}\right| \leq \sum_{i = 0}^{n} \frac{1}{|\pi|^{ik}} \\&< \sum_{i = 0}^{\infty} \frac{1}{|\pi|^{ik}} = \frac{|\pi|^k}{|\pi|^k-1}.
    \end{align*}
    This proves the right side of the inequality.
    
    The above inequality can be proved for norms as well in a similar manner.
\end{proof}

We now proceed to prove results on a direct application of the above properties: perfect numbers.

\section{Perfect Numbers} 

We begin this section with a formal definition of $k$-powerful $\tau$-perfect numbers. This is followed by a number of lemmas and, finally, a theorem in regarding the existence of such numbers for $k \in \mathbb{N}$.
\begin{definition}
    We define a complex number  $\eta \in \mathbb{Z}[i]$ to be $k$-powerful $\tau$-perfect, if, for $k \in \mathbb{N}$, $\tau \in \mathbb{Z}[i]$, and $|\tau|>1$:
    \begin{align*}
        I_k(\eta) = \tau.
    \end{align*}
    
    Along the same lines, we also define $k$-powerful $t$-norm-perfect numbers for $k \in \mathbb{N}$, $t \in \mathbb{Z}$, and $t>1$ to be complex numbers $\eta \in \mathbb{Z}[i]$ such that:
    \begin{align*}
        \norm{I_k(\eta)} = t.
    \end{align*}
\end{definition}

We now proceed to prove that such numbers exist for only $k = 1$. We first prove the following lemmas.

\begin{lemma}\label{zb1}
    For $s\in \mathbb{R}$ and $s > 1$, 
    \begin{align*}
        \zeta_{\mathbb{Q}[i]}(s) = \zeta(s)L(s, \chi_1) = \zeta(s)\beta(s),
    \end{align*}
    where $\zeta_{\mathbb{Q}[i]}(s)$ is the Dedekind Zeta function on the field $\mathbb{Q}[i]$, $\zeta(s)$ is the Riemann Zeta function, and $L(s, \chi_1) = \beta(s)$ is the Dirichlet L-function with character $\chi_1(n)$ modulo 4 (equivalent to the Dirichlet Beta function $\beta(s)$).
\end{lemma}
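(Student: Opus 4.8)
The plan is to prove the identity by comparing Euler products, working throughout in the region $s > 1$ where all the relevant series and products converge absolutely, so that rearrangement and regrouping are justified. First I would recall that $\mathbb{Z}[i]$ is the ring of integers of $\mathbb{Q}(i)$ and is a principal ideal domain, so that nonzero ideals correspond bijectively to first-quadrant Gaussian integers and the Dedekind zeta function can be written as the Euler product
\begin{align*}
    \zeta_{\mathbb{Q}[i]}(s) = \prod_{\pi} \frac{1}{1 - \norm{\pi}^{-s}},
\end{align*}
the product taken over the Gaussian primes $\pi$ in the first quadrant, with $\norm{\pi}$ the norm of the prime ideal $(\pi)$.

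The key structural input is the classical classification of how a rational prime $p$ factors in $\mathbb{Z}[i]$, which follows from Fermat's two-squares theorem (equivalently, quadratic reciprocity modulo $4$): $2 = -i(1+i)^2$ is ramified, with the prime above it of norm $2$; a prime $p \equiv 1 \pmod 4$ splits as $p = \pi\bar{\pi}$ into two non-associate Gaussian primes each of norm $p$; and a prime $p \equiv 3 \pmod 4$ remains prime in $\mathbb{Z}[i]$ with norm $p^2$. Grouping the factors of the Euler product above according to the rational prime lying below each Gaussian prime yields
\begin{align*}
    \zeta_{\mathbb{Q}[i]}(s) = \frac{1}{1 - 2^{-s}} \prod_{p \equiv 1 \, (4)} \frac{1}{(1 - p^{-s})^2} \prod_{p \equiv 3 \, (4)} \frac{1}{1 - p^{-2s}}.
\end{align*}

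On the other side, write $\zeta(s) = \prod_p (1-p^{-s})^{-1}$ and $L(s,\chi_1) = \prod_p (1-\chi_1(p)p^{-s})^{-1}$, where $\chi_1$ is the non-principal character modulo $4$, so that $\chi_1(2) = 0$, $\chi_1(p) = 1$ for $p \equiv 1 \pmod 4$, and $\chi_1(p) = -1$ for $p \equiv 3 \pmod 4$. Multiplying these two products rational-prime by rational-prime, the factor at $2$ is $(1-2^{-s})^{-1}$, the factor at $p \equiv 1 \pmod 4$ is $(1-p^{-s})^{-2}$, and the factor at $p \equiv 3 \pmod 4$ is $[(1-p^{-s})(1+p^{-s})]^{-1} = (1-p^{-2s})^{-1}$ --- precisely the product displayed above. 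This gives $\zeta_{\mathbb{Q}[i]}(s) = \zeta(s)L(s,\chi_1)$, and expanding $L(s,\chi_1) = \sum_{n\ge1}\chi_1(n)n^{-s} = 1 - 3^{-s} + 5^{-s} - 7^{-s} + \cdots$ identifies it with the Dirichlet beta function $\beta(s)$. The only real care needed is in justifying the regrouping of the products (valid for $s>1$ by absolute convergence) and in handling the ramified prime $2$ correctly; the prime-splitting classification itself is the substantive ingredient, and it is standard enough to be cited rather than reproved.
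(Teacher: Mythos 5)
Your proof is correct and follows essentially the same route as the paper: both arguments start from the Euler product of $\zeta_{\mathbb{Q}[i]}(s)$ over first-quadrant Gaussian primes, group factors according to the splitting behaviour of the rational prime below (ramified at $2$, split for $p \equiv 1 \bmod 4$, inert for $p \equiv 3 \bmod 4$), and match the result factor-by-factor with $\zeta(s)L(s,\chi_1) = \zeta(s)\beta(s)$. The only cosmetic difference is that you multiply $\zeta(s)$ and $L(s,\chi_1)$ together and compare, whereas the paper factors $\zeta(s)$ out of the grouped product and identifies the remainder with $L(s,\chi_1)$.
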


\begin{proof}
    Let the set of prime ideals in $\mathbb{Z}[i]$ (in the first quadrant) be represented by $\mathcal{P}$. For each prime ideal $\pi \in \mathbb{Z}[i]$, we have the following 3 cases:
    
    \begin{itemize}
        \item $\pi \sim p$ and $\norm{\pi} = p^2$ for some prime $p \in \mathbb{Z}$ such that $p \equiv 3\pmod 4$.
        \item $\pi \sim \bar{\pi}$ and $\norm{\pi} = 2$  (this case only holds for $\pi \sim 1 + i$).
        \item $\pi \nsim \bar{\pi}$ and $\norm{\pi} = \norm{\bar{\pi}} = p$ for some prime $p \in \mathbb{Z}$ such that $p \equiv 1\pmod 4$
    \end{itemize}
    
    Thus, from the definition of $\zeta_{\mathbb{Q}[i]}(s)$:
    \begin{align*}
        \zeta_{\mathbb{Q}[i]}(s) &= \prod_{\pi \in \mathcal{P}} \frac{1}{1-\norm{\pi}^{-s}} = \prod_{\substack{\pi \in \mathcal{P} \\p = \norm{\pi}}} \frac{1}{1-p^{-s}} \\ 
        &= \frac{1}{1-2^{-s}} \prod_{ p \equiv 1\mathrm{mod} 4} \frac{1}{(1-p^{-s})^2} \prod_{ p \equiv 3\mathrm{mod} 4} \frac{1}{1-p^{-2s}}\\
        &= \prod_{p} \frac{1}{1-p^{-s}} \left(\prod_{ p \equiv 1\, \mathrm{mod}\, 4} \frac{1}{1-p^{-s}} \prod_{ p \equiv 3\, \mathrm{mod}\, 4} \frac{1}{1+p^{-s}}\right)\\
        &= \zeta(s)\left(\prod_{ p \equiv 1\, \mathrm{mod}\, 4} \frac{1}{1+\chi_1(p)p^{-s}} \prod_{ p \equiv 3\, \mathrm{mod}\, 4} \frac{1}{1+\chi_1(p)p^{-s}}\right)\\
        &= \zeta(s)\prod_{ p \neq 2} \frac{1}{1+\chi_1(p)p^{-s}} = \zeta(s)L(s, \chi_1) = \zeta(s)\beta(s),
    \end{align*}
    as follows from the definitions of $\zeta(s)$, $L(s, \chi_1)$, and $\beta(s)$.
\end{proof}

\begin{lemma}\label{zb2}
    For $s\in \mathbb{R}$ and $s > 1$, $\beta(s) \leq \zeta(s)$
\end{lemma}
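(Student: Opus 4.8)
The plan is to reduce the claim to an elementary term-by-term comparison of Dirichlet series. Recall that $\chi_1$ is the (non-principal) Dirichlet character modulo $4$, so $\chi_1(n) \in \{-1, 0, 1\}$ for every $n$, with $\chi_1(n) = 0$ whenever $n$ is even. For $s > 1$ both series $\zeta(s) = \sum_{n=1}^{\infty} n^{-s}$ and $\beta(s) = L(s,\chi_1) = \sum_{n=1}^{\infty} \chi_1(n) n^{-s}$ converge absolutely, which is exactly what the hypothesis $s > 1$ buys us and the only point that needs a word of justification.

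The first step is to pass from $\beta(s)$ to the series of absolute values: by absolute convergence, $\beta(s) = \sum_{n=1}^{\infty} \chi_1(n) n^{-s} \le \sum_{n=1}^{\infty} |\chi_1(n)| n^{-s}$. The second step is the pointwise bound $|\chi_1(n)| \le 1$, which immediately gives $\sum_{n=1}^{\infty} |\chi_1(n)| n^{-s} \le \sum_{n=1}^{\infty} n^{-s} = \zeta(s)$. Chaining the two inequalities yields $\beta(s) \le \zeta(s)$. (One in fact gets a strict inequality, since every even term is present in $\zeta$ but absent from $\beta$; the non-strict bound is all that is used later.)

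If one prefers to stay within the Euler-product framework of Lemma \ref{zb1}, an equivalent argument compares the products prime by prime: for $s > 1$ one has $0 < p^{-s} < 1$, so for each odd prime the corresponding factor of $\beta(s)$ is bounded above by that of $\zeta(s)$ (with equality exactly at the primes $p \equiv 3 \pmod 4$), while $\zeta(s)$ additionally carries the factor $(1 - 2^{-s})^{-1} > 1$; the product of these factorwise inequalities again gives $\beta(s) \le \zeta(s)$. Either way there is no genuine obstacle: the substance of the lemma is entirely contained in the trivial bound $|\chi_1| \le 1$, and the only care required is to invoke absolute convergence (equivalently, convergence of the Euler products) so that the rearrangements are legitimate.
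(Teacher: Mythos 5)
Your proposal is correct, but your primary argument takes a different route from the paper. You compare the Dirichlet series termwise: since $\chi_1(n)\in\{-1,0,1\}$ and both series converge absolutely for $s>1$, you get $\beta(s)=\sum_{n\ge 1}\chi_1(n)n^{-s}\le\sum_{n\ge 1}n^{-s}=\zeta(s)$, with strictness even coming for free from the missing even terms. The paper instead stays inside the Euler-product framework set up in Lemma \ref{zb1}: it bounds each factor $\left(1+p^{-s}\right)^{-1}\le\left(1-p^{-s}\right)^{-1}$ for $p\equiv 3 \pmod 4$, leaves the $p\equiv 1\pmod 4$ factors untouched, and inserts the extra factor $\left(1-2^{-s}\right)^{-1}\ge 1$ to reassemble $\zeta(s)$ --- which is exactly the alternative you sketch in your last paragraph. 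Your series argument is arguably more elementary (it needs only $|\chi_1|\le 1$ and absolute convergence, no Euler products), while the paper's version has the cosmetic advantage of matching the product expressions already in play from Lemma \ref{zb1}. One small correction to your product sketch: among the odd primes, the factors of $\beta(s)$ and $\zeta(s)$ agree exactly when $p\equiv 1\pmod 4$ (where $\chi_1(p)=1$), and the inequality is strict when $p\equiv 3\pmod 4$, not the other way around; this does not affect the validity of the bound.
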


\begin{proof}
    By definition:
    \begin{align*}
        \beta(s) &= \prod_{ p \equiv 1\, \mathrm{mod}\, 4} \frac{1}{1-p^{-s}} \prod_{ p \equiv 3\, \mathrm{mod}\, 4} \frac{1}{1+p^{-s}} \\&\leq \frac{1}{1-2^{-s}}\prod_{ p \equiv 1\, \mathrm{mod}\, 4} \frac{1}{1-p^{-s}} \prod_{ p \equiv 3\, \mathrm{mod}\, 4} \frac{1}{1-p^{-s}} \\ &= \prod_{p} \frac{1}{1-p^{-s}} = \zeta(s).
    \end{align*}
\end{proof}

\begin{lemma}\label{zb3}
    For $\eta \in \mathbb{Z}[i]$ and $k \geq 2$:
    \begin{align*}
        \norm{I_k(\eta)} < \zeta_{\mathbb{Q}[i]}(k).
    \end{align*}
\end{lemma}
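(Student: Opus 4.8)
The plan is to reduce the inequality to a local (prime-by-prime) estimate and then match it, factor by factor, against the Euler product for $\zeta_{\mathbb{Q}[i]}(k)$ produced in the proof of Lemma~\ref{zb1}.

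Write $\eta=u\prod_i\pi_i^{e_i}$. By Equation~\ref{eqn1} we have $I_k(\eta)=\sigma_{-k}(\eta)$, and since the norm is completely multiplicative on $\mathbb{Q}[i]$ (i.e.\ $\norm{zw}=\norm z\norm w$), Property~\ref{prop1} gives
\[
  \norm{I_k(\eta)}=\prod_i\norm{\sigma_{-k}(\pi_i^{e_i})}=\prod_i\norm{1+\pi_i^{-k}+\pi_i^{-2k}+\cdots+\pi_i^{-e_ik}}.
\]
Hence it would be enough to show that each local factor satisfies $\norm{\sigma_{-k}(\pi^{e})}\le\frac{1}{1-\norm{\pi}^{-k}}$ for every first-quadrant prime $\pi$ (equivalently, every prime ideal) and every $e\ge0$. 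Granting this, and using that $\eta$ has only finitely many prime divisors whereas there are infinitely many prime ideals, each contributing a factor $>1$, one would obtain
\[
  \norm{I_k(\eta)}\le\prod_{\pi\mid\eta}\frac{1}{1-\norm{\pi}^{-k}}<\prod_{\text{all }\pi}\frac{1}{1-\norm{\pi}^{-k}}=\zeta_{\mathbb{Q}[i]}(k),
\]
the last equality being exactly the Euler product in the proof of Lemma~\ref{zb1}.

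For the local estimate I would use the closed form $\sigma_{-k}(\pi^{e})=\frac{1-\pi^{-k(e+1)}}{1-\pi^{-k}}$, so that
\[
  \norm{\sigma_{-k}(\pi^{e})}=\frac{\norm{1-\pi^{-k(e+1)}}}{\norm{1-\pi^{-k}}}=\frac{\norm{\pi}^{k}\,\norm{1-\pi^{-k(e+1)}}}{\norm{\pi^{k}-1}},
\]
and the goal becomes $\norm{1-\pi^{-k(e+1)}}\,(\norm{\pi}^{k}-1)\le\norm{\pi^{k}-1}$. Expanding $\norm{1-w}=1-2Re(w)+\norm w$ turns this into an inequality among $Re(\pi^{k})$, $Re(\pi^{-k(e+1)})$ and $\norm{\pi}^{-k}$; the denominator $\norm{\pi^{k}-1}$ can be controlled from below via Lemmas~\ref{ineq1}--\ref{ineq3} applied to $\pi^{k}$, and since $k\ge2$ forces $\norm{\pi}^{-k}\le\norm{\pi}^{-2}\le 1/4$, the factor $\norm{1-\pi^{-k(e+1)}}$ stays close to $1$.

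The step I expect to be genuinely delicate is precisely this local bound. The crude route --- triangle inequality gives $|\sigma_{-k}(\pi^{e})|\le\sum_j\norm{\pi}^{-jk/2}<(1-\norm{\pi}^{-k/2})^{-1}$, hence $\norm{\sigma_{-k}(\pi^{e})}<(1-\norm{\pi}^{-k/2})^{-2}$ --- is not strong enough, because $\norm{\cdot}$ is the \emph{squared} modulus and is not subadditive, and $(1-\norm{\pi}^{-k/2})^{-2}$ is strictly larger than the target $(1-\norm{\pi}^{-k})^{-1}=\big((1-\norm{\pi}^{-k/2})(1+\norm{\pi}^{-k/2})\big)^{-1}$. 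One therefore has to exploit the genuine partial cancellation among the summands $\pi^{-jk}$, whose arguments are the distinct multiples of $k\arg(\pi)$; the real primes $p\equiv3\pmod4$, where $\arg(\pi)=0$ and no cancellation takes place, look like the extremal case and may need separate treatment --- for instance by a more global accounting that compares the whole product against the factorization $\zeta_{\mathbb{Q}[i]}(k)=\zeta(k)\beta(k)$ of Lemma~\ref{zb1} rather than bounding it factor by factor. Making this comparison hold uniformly in $\pi$ and $e$ is the crux of the argument.
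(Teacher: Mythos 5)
Your plan follows the same reduction the paper itself attempts, but the step you single out as the crux --- the local bound $\norm{\sigma_{-k}(\pi^{e})}\le(1-\norm{\pi}^{-k})^{-1}$ --- is not merely delicate: it is false, and precisely at the primes you point to. For the inert prime $\pi=3$ with $k=2$, $e=1$ one already has $\norm{\sigma_{-2}(3)}=(10/9)^2=100/81$, whereas the target is $(1-\norm{3}^{-2})^{-1}=81/80$; in general, for a real prime $p\equiv 3\pmod 4$ the local factor increases with $e$ toward $(1-p^{-k})^{-2}$, which always exceeds $(1-\norm{p}^{-k})^{-1}=(1-p^{-2k})^{-1}$ since $(1-x)^2<1-x^2$ for $0<x<1$, and there is no cancellation to exploit because all summands are positive reals. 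The bound fails at split primes too: $\norm{\sigma_{-2}(2+i)}=\norm{(28-4i)/25}=32/25>25/24$. So the local estimate on which your whole argument rests cannot be established.

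The fallback you suggest --- a global comparison against $\zeta(k)\beta(k)$ --- cannot rescue it either, because the lemma as stated is false. Take $k=2$ and $\eta=3(2+i)=6+3i$: then $I_2(\eta)=\frac{10}{9}\cdot\frac{28-4i}{25}=\frac{56-8i}{45}$, so $\norm{I_2(\eta)}=\frac{3200}{2025}=\frac{128}{81}\approx 1.580$, while $\zeta_{\mathbb{Q}[i]}(2)=\zeta(2)\beta(2)\approx 1.645\times 0.916\approx 1.507$ (and appending higher powers of $3$ or further split primes pushes the value higher). For comparison, the paper's own proof commits exactly the leap you balked at: it invokes a ``norm version'' of Property \ref{prop4}, namely $\norm{I_k(\pi^{e})}\le\norm{\pi}^{k}/(\norm{\pi}^{k}-1)$, but Property \ref{prop4} is an absolute-value statement, and squaring it yields only the strictly weaker $\norm{I_k(\pi^{e})}<\norm{\pi}^{k}/\bigl(|\pi|^{k}-1\bigr)^{2}$; the examples above show the stronger norm bound simply does not hold. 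What the triangle inequality does give (for $k\ge 3$, so that the Euler product converges) is $\norm{I_k(\eta)}<\zeta_{\mathbb{Q}[i]}(k/2)^{2}$, which is weaker than the stated lemma and too weak to yield the paper's subsequent nonexistence theorem at small $k$. So your instinct about the inert primes was exactly right; the correct conclusion is that the statement itself, not just your proof of it, needs repair.
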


\begin{proof}
    Let $\mathcal{P}$ represent the set of prime ideals in $\mathbb{Z}[i]$ (in the first quadrant). Further, let $\mathcal{D}(\eta) = \{\pi_i\}$ be the set of primes dividing $\eta$ with $e_i = \text{ord}_{\pi_i}(\eta)$.
    Using Property \ref{prop4}, we have:
    \begin{align*}
        \norm{I_k(\eta)} &= \prod_{\pi_i \in \mathcal{D}(n)} \norm{I_k(\pi_i^{e_i})} \leq \prod  \frac{\norm{\pi_i}^k}{\norm{\pi_i}^k - 1} = \prod  \frac{\norm{\pi_i}^k}{\norm{\pi_i}^k - 1} \\
        &< \prod_{\pi \in \mathcal{P}} \frac{\norm{\pi}^k}{\norm{\pi}^k - 1} = \zeta_{\mathbb{Q}[i]}(k).
    \end{align*}
\end{proof}

Now we prove the main theorem:

\begin{theorem}
    There exist no $k$-powerful $\tau$-perfect numbers or $k$-powerful $t$-norm-perfect numbers for all $k\geq 2$.
\end{theorem}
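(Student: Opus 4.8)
The plan is to reduce the statement to a single analytic estimate, namely that $\norm{I_k(\eta)} < 2$ for every $\eta \in \mathbb{Z}[i]$ and every integer $k \geq 2$, and then to note that this is incompatible with both notions of perfection at once. Indeed, if $\eta$ were $k$-powerful $\tau$-perfect then $\norm{I_k(\eta)} = \norm{\tau}$ is a rational integer with $\norm{\tau} = |\tau|^2 > 1$, hence $\norm{\tau} \geq 2$; and if $\eta$ were $k$-powerful $t$-norm-perfect then $\norm{I_k(\eta)} = t \in \mathbb{Z}$ with $t > 1$, hence $t \geq 2$. Either possibility is ruled out once we know $\norm{I_k(\eta)} < 2$. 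By Lemma~\ref{zb3} it therefore suffices to prove $\zeta_{\mathbb{Q}[i]}(k) \leq 2$ for all $k \geq 2$.

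To bound $\zeta_{\mathbb{Q}[i]}(k)$ I would first use Lemma~\ref{zb1} to factor it as $\zeta(k)\beta(k)$ and then estimate the two factors separately. For the zeta factor, each term $n^{-k}$ is nonincreasing in $k$, so $\zeta(k) \leq \zeta(2)$, and the telescoping bound $n^{-2} < (n-1)^{-1} - n^{-1}$ for $n \geq 2$ gives $\zeta(2) < 1 + \sum_{n \geq 2}\big((n-1)^{-1} - n^{-1}\big) = 2$. For the beta factor, $\beta(k) = \sum_{n \geq 0} (-1)^n (2n+1)^{-k}$ is an alternating series whose terms strictly decrease to $0$, so its value lies strictly below its first term, i.e. $\beta(k) < 1$. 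Multiplying, $\zeta_{\mathbb{Q}[i]}(k) = \zeta(k)\beta(k) < 2$. Combined with Lemma~\ref{zb3} this yields $\norm{I_k(\eta)} < \zeta_{\mathbb{Q}[i]}(k) < 2$, which completes the argument.

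The step that requires genuine care is the bound $\beta(k) < 1$. Using only Lemma~\ref{zb2} one obtains the weaker estimate $\zeta_{\mathbb{Q}[i]}(k) \leq \zeta(k)^2$, and this is already useless in the critical case $k = 2$, where $\zeta(2)^2 = \pi^4/36 \approx 2.7 > 2$. So the alternating-series structure of the Dirichlet beta function is doing essential work here, and it cannot be replaced by the cruder comparison $\beta \leq \zeta$; one must keep the factor $\beta(k)$ and exploit that it is genuinely below $1$. The remaining ingredients — monotonicity of $\zeta$ in its argument, the telescoping comparison, and the observation that both flavours of perfection force $\norm{I_k(\eta)}$ to be an integer at least $2$ — are all routine.
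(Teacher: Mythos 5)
Your proposal is correct, and it shares the paper's overall skeleton: bound $\norm{I_k(\eta)}$ by $\zeta_{\mathbb{Q}[i]}(k)$ via Lemma~\ref{zb3}, factor $\zeta_{\mathbb{Q}[i]}(k)=\zeta(k)\beta(k)$ via Lemma~\ref{zb1}, show the product is below $2$, and conclude by integrality of $\norm{\tau}$ (resp.\ of $t$). Where you differ is the endgame. The paper splits into two cases: for $k\geq 3$ it uses Lemma~\ref{zb2} to get $\zeta(k)\beta(k)\leq\zeta(k)^2\leq\zeta(3)^2\approx 1.445<2$, which needs the numerical value of $\zeta(3)$, and for $k=2$ (where, as you observe, the $\zeta(k)^2$ bound fails since $\zeta(2)^2=\pi^4/36>2$) it falls back on $\zeta(2)=\pi^2/6<2$ together with $\beta(2)<1$. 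You instead treat all $k\geq 2$ uniformly: $\zeta(k)\leq\zeta(2)<2$ by termwise monotonicity plus the telescoping estimate, and $\beta(k)<1$ for every $k\geq 2$ by the alternating-series bound, so Lemma~\ref{zb2} is never needed and no numerical evaluation of $\zeta(3)$ or $\pi^2/6$ enters. Your version is arguably cleaner and more self-contained; the paper's case split buys nothing except the chance to quote the crude comparison $\beta\leq\zeta$ where it happens to suffice. One further small improvement on your side: you deduce $\norm{\tau}\geq 2$ correctly from $\norm{\tau}=|\tau|^2>1$ being a rational integer, whereas the paper's closing sentence speaks of ruling out $|\tau|\geq 2$, which is a slight misstatement of what the hypothesis $|\tau|>1$ actually gives ($|\tau|\geq\sqrt{2}$, i.e.\ $\norm{\tau}\geq 2$); your formulation is the one that makes the contradiction airtight for both flavours of perfection.
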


\begin{proof}
    We take two cases: $k\geq 3$ and $k = 2$.
    
    For the first case, from Lemmas \ref{zb1}, \ref{zb2}, \ref{zb3}, we have the following:
    \begin{align*}
        \norm{I_k(\eta)} < \zeta_{\mathbb{Q}[i]}(k) = \zeta(k)\beta(k) \leq \zeta(k)^2
    \end{align*}
    Thus, since $\zeta(s)$ is monotone decreasing for $s >1$, we have:
    \begin{align*}
        \norm{I_k(\eta)} < \zeta(k)^2 \leq \zeta(3)^2 \approx 1.20206^2 < 2.
    \end{align*}
    
    For the case $k = 2$, from Lemma \ref{zb1}, we have:
    \begin{align*}
        \norm{I_2(\eta)} < \zeta_{\mathbb{Q}[i]}(2) = \zeta(2)\beta(2).
    \end{align*}
    Using $\zeta(2) = \frac{\pi^2}{6}<2$ and $\beta(2) < 1$:
    \begin{align*}
        \norm{I_2(\eta)} < \zeta(2)\beta(2) < 2.
    \end{align*}
    
    Thus, it is impossible to find any $\tau \in \mathbb{Z}[i]$ with $|\tau| \geq 2$ to satisfy $I_k(\eta) = \tau$ for any Gaussian Integer $\eta$. Hence, it is not possible for any Gaussian Integer to be $k$-powerful $\tau$-perfect for $k \geq 2$. Similarly, by the above result, the existence of $k$-powerful $t$-norm-perfect numbers is impossible in $\mathbb{Z}[i]$ for $k \geq 2$.
    
\end{proof}

\section{Conclusion}
In this paper, we have proposed a complex-to-complex definition of abundancy index for the Gaussian integers that possesses many of the properties of the traditional abundancy index in integers. We have further proved that the existence of $k$-powerful $\tau$-perfect numbers in $\mathbb{Z}[i]$ only holds for $k \geq 2$. 

We note that many of the results can be easily generalized to other imaginary quadratic rings which are unique factorization domains, i.e., we are considering rings of the form $\mathbb{Z}[\sqrt{d}]$ (if $d \equiv 1 \mod 4$) and $\mathbb{Z}[\frac{1+\sqrt{d}}{2}]$ (if $d \equiv 3 \mod 4$) for $d \in \{-2, -3, -7, -11, -19, -43, -67, -163\}$. This is because, in most of the proofs, we only make use of the fact that the norm is an integer. Since the the given rings have integral norm, it should not be very difficult to further generalize the above results to these rings. 

\section{Acknowledgments}
The author would like to thank Dr. B S Ramachandra for his continuous guidance and advice at various stages of research. The author would like to thank Dr. Maxim Gilula for all the help with writing the paper. The author would also like to thank his family for their support throughout.

\end{document}